\title{A convex set with a rich difference}
\author{}
\newtheorem{claim}{Claim}
\numberwithin{exercise}{subsection}
\newtheorem{theorem}{Theorem}
\newtheorem*{theorem*}{Theorem}
\author[O. Roche-Newton]{Oliver Roche-Newton} \address{Institute for Algebra, Johannes Kepler Universit\"{a}t\\
Linz, Austria}
\email{o.rochenewton@gmail.com}
\author[A. Warren]{Audie Warren} \address{Johann Radon Institute for Computational and Applied Mathematics\\
Linz, Austria}
\email{audie.warren@oeaw.ac.at}
\begin{document}
\maketitle

\begin{abstract}

We construct a convex set $A$ with cardinality $2n$ and with the property that an element of the difference set $A-A$ can be represented in $n$ different ways. We also show that this construction is optimal by proving that for any convex set $A$, the maximum possible number of representations an element of $A-A$ can have is $\lfloor |A|/2 \rfloor $.
\end{abstract}

\subsection*{Introduction}
A finite set $A \subset \mathbb R$ is said to be \textit{convex} if the consecutive differences are strictly increasing. That is, if we write $A=\{a_1<a_2<\dots<a_n \}$, $A$ is convex if
\[
a_i - a_{i-1} < a_{i+1} - a_i
\]
holds for all $2 \leq i \leq n-1$. One can also use the equivalent formulation that a set $A$ is convex if we can write $A=f(\{1,2,\dots, n \})$ for some strictly convex function $f$. The convexity of $f$ disrupts the additive structure of the pre-image $\{1,2,\dots, n \}$, and this leads us to expect that a convex set cannot have much additive structure.

This principle can be quantified in different ways, and one such way is to prove that the difference set
\[
A-A:= \{ a-b: a,b \in A\} 
\]
is large. The current state of the art for this problem is a result of Schoen and Shkredov \cite{SS}, proving that the bound\footnote{Throughout this note, the notation
 $X\gg Y$ and $Y \ll X,$ are equivalent and mean that $X\geq cY$ for some absolute constant $c>0$.}
\[
|A-A| \gg |A|^{8/5 - o(1)}
\]
holds for any convex set $A$.

Another approach is to consider the additive energy
\[
E(A):=| \{(a,b,c,d) \in A^4 : a-b=c-d \}|,
\]
which can also be expressed as
\[
E(A)= \sum_{x} r^2_{A-A}
\]
where $r_{A-A}(x):= |\{(a,b) \in A \times A : a-b = x \}|$. The bound
\begin{equation} \label{Ebound}
E(A) \ll |A|^{5/2}
\end{equation}
was proven using incidence theory by Konyagin \cite{K} and using elementary methods by Garaev \cite{G}. See also \cite{RRNS} for an alternative presentation of a proof of \eqref{Ebound}. A further improvement was later given by Shkredov \cite{S}, using additional higher energy tools from additive combinatorics.

One might even expect that a qualitatively stronger statement than \eqref{Ebound} holds; namely that $r_{A-A}(x)$ is guaranteed to be small for all $x \neq 0$. Indeed, if one knew, for instance, that $r_{A-A}(x) \leq |A|^{1-c}$ holds for all $x \neq0$, this immediately implies the non-trivial bound $E(A) \ll |A|^{3-c}$, which in turn implies the non-trivial bound $|A-A| \gg |A|^{1+c}$.

However, a construction of Schoen \cite{Sch} shows that such a uniform upper bound for the representation function $r_{A-A}(x)$ is not possible. Schoen constructed a convex set with $n$ elements and some $x \neq 0$ with $r_{A-A}(x) \geq n/4$.

The main purpose of this note is to give a construction of a convex set with a rich difference which improves the construction of Schoen. We prove the following result.

\begin{theorem} \label{thm:construction}
For every $m\in \mathbb N$, there exists a convex set $A\subseteq \mathbb R$ of size $2m$ and a non-zero element $d \in A-A$ such that $r_{A-A}(d) \geq m$.
\end{theorem}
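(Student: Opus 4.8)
The plan is to build $A$ through its gap sequence and to force $d$ to be represented by several \emph{overlapping} pairs rather than by disjoint ones. Writing $A=\{P_0<P_1<\dots<P_{2m-1}\}$ and $g_i=P_i-P_{i-1}$, convexity is precisely the requirement that $g_1<g_2<\dots<g_{2m-1}$, while a representation $P_j-P_i=d$ corresponds to a block of consecutive gaps $g_{i+1}+\dots+g_j$ summing to $d$. The task thus becomes: find a strictly increasing gap sequence admitting $m$ distinct contiguous blocks of equal sum $d$.

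First I would record why the obvious attempt fails. Taking $A=B\cup(B+d)$ for a set $B$ of size $m$ produces $m$ disjoint representations of $d$, but the translation symmetry of $B\cup(B+d)$ forces the sorted gap sequence to repeat a value, which is incompatible with strict convexity (one checks this already collapses at $m=2$). This is the main obstacle, and it shows the representations must share endpoints. I would therefore aim for blocks of \emph{decreasing width}: since the gaps increase, a fixed total $d$ is attained by many small gaps near the bottom and few large gaps near the top.

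Concretely, I would demand, for each $k=1,\dots,m$, the representation $P_{2m-k}-P_{2m-2k}=d$; that is, the block of $k$ consecutive gaps ending at $g_{2m-k}$ should sum to $d$. Equating consecutive blocks yields the recurrence $g_{2m-k}=g_{2m-2k-1}+g_{2m-2k}$ for $1\le k\le m-1$, equivalently $g_{m+j}=g_{2j-1}+g_{2j}$ for $1\le j\le m-1$. I would then choose the bottom $m$ gaps to be clustered and increasing, say $g_i=m+i$ for $1\le i\le m$, and define the remaining gaps by this recurrence. Setting $d:=g_{2m-1}$, all $m$ blocks then have common sum $d\neq 0$, which gives the $m$ distinct representations $P_{2m-k}-P_{2m-2k}=d$ and hence $r_{A-A}(d)\ge m$.

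The only thing left is to verify convexity, i.e.\ strict monotonicity of the full gap sequence, and here the clustering is exactly what is needed. The single delicate inequality is $g_{m+1}=g_1+g_2>g_m$, which holds because the bottom gaps are close together; the remaining inequalities $g_{m+j}>g_{m+j-1}$ follow by induction on the index, since $g_{m+j}=g_{2j-1}+g_{2j}$ is a sum of two gaps each strictly exceeding the corresponding summand of $g_{m+j-1}=g_{2j-3}+g_{2j-2}$. I expect the index bookkeeping in this induction to be the only fiddly point; everything else is a direct verification.
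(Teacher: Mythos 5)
Your proposal is correct and is essentially the paper's own construction in different clothing: your blocks of gaps are exactly the paper's overlapping representations $a_{m+1+i}-a_{1+2i}=d$ (widths $m, m-1, \dots, 1$), your recurrence $g_{m+j}=g_{2j-1}+g_{2j}$ is precisely the paper's definition $a_{m+1+i}:=a_{1+2i}+d$ rewritten in gap form, and your convexity induction (each new gap is a sum of two consecutive earlier gaps, which dominate the two summands of the previous new gap) is the paper's inductive step verbatim. The only difference is parametrization: your choice $g_i=m+i$ amounts to taking the paper's $\delta=1/(m+1)$ and dilating, which has the minor advantage of producing an integer set directly.
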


We also show that this construction is optimal, proving that, for any convex set with cardinality $n$ and any $d \neq 0$,
\[
r_{A-A}(d) \leq \left \lfloor \frac{n}{2} \right \rfloor .
\]

\subsection*{The construction}

\begin{proof}[Proof of Theorem \ref{thm:construction}]

We give a concrete construction of the set 
\[
A=\{a_1<a_2< \dots <a_{2m} \},
\] 
which is made up of two halves. The set $A$ begins with $0$, and then has gaps $1 + (i-1)\delta$, for some very small $\delta >0$ which will be specified later. The first half of $A$ is filled like this. That is, for $1 \leq k \leq m+1$, we define
\[
a_k:= (k-1) + \delta \frac{(k-2)(k-1)}{2},
\]
and so the first $m+1$ elements of $A$ are the elements of the set
\[
A_1:=\left\{ 0, 1, 2+\delta, 3+3\delta,...,m + \delta \frac{m(m-1)}{2}\right\}.
\]
Fix 
\[
d:=m + \delta \frac{m(m-1)}{2}=a_{m+1}.
\]

The rest of $A$ is defined iteratively. For $1 \leq i \leq m-1$, we set
\[
a_{m+1+i}:= a_{1 + 2i}+d.
\]
This immediately gives rise to the system of equations
\begin{equation} \label{eq:diff}
     d=a_{m+1} - a_1=a_{m+2} - a_3= \dots = a_{2m} - a_{2m-1}.
\end{equation}
We therefore have $r_{A-A}(d) \geq m$.

It remains to check that this set is convex. Note that the first part of $A$, namely $A_1=\{a_1,\dots,a_{m+1} \}$, is convex, since the consecutive difference increase by $\delta$ at each step. 

We will prove by induction on $i$ that the set
\[
\{ a_1,a_2,\dots, a_{m+2+i} \}
\]
is convex for $0 \leq i \leq m-2$.

We first check the base case $i=0$. We need to verify that the difference $a_{m+2} - a_{m+1}$ is sufficiently large, which will give a condition on $\delta$. We must have
$$a_{m+2} - a_{m+1} > a_{m+1} - a_m,$$
which upon plugging in the definitions yields
$$2 + \delta > 1 + \frac{\delta m(m-1)}{2} - \frac{\delta (m-2)(m-1)}{2}.$$
After simplification, this gives the condition $\delta < \frac{1}{m-2}$.

Now let $1 \leq i \leq m-2$. We must verify that 
\[
\{ a_1,a_2,\dots, a_{m+2+i} \}
\]
is convex, given the induction hypothesis that $\{ a_1,a_2,\dots, a_{m+1+i} \}$ is convex. All that remains is to check that
$$a_{m+2+i} - a_{m+1+i} > a_{m+1+i} - a_{m+i}.$$
 We use equations \eqref{eq:diff} to rewrite each side, as
$$a_{m+2+i} - a_{m+1+i} = a_{1+2(i+1)} - a_{1 + 2i}$$
$$a_{m+1+i} - a_{m+i} = a_{1+2i} - a_{1+2(i-1)}.$$
Note that, since the differences on the right hand side above are then consecutive differences of length two within a convex set, we have
\begin{align*}
   a_{m+2+i} - a_{m+1+i} &= a_{1+2(i+1)} - a_{1 + 2i} \\
   & > a_{1+2i} - a_{1+2(i-1)} \\
   & = a_{m+1+i} - a_{m+i} 
\end{align*}
as needed. Here we have used the inductive hypothesis that $\{ a_1,a_2,\dots, a_{m+1+i} \}$ is convex as well as the fact that $1+2(i+1) \leq m+1+i$. The latter inequality follows from the condition that $i \leq m-2$.
\end{proof}
Note that by taking $\delta$ to be a sufficiently small rational number, and dilating the set $A$ through by common denominators, we can find $A \subseteq \mathbb Z$ satisfying Theorem \ref{thm:construction}.

\subsection*{A matching upper bound for the representation function}

The next result shows that the construction of Theorem \ref{thm:construction} is optimal.

\begin{theorem}
For a convex set $A \subset \mathbb R$ and any $d \in \mathbb R \setminus \{0\}$,
\[
r_{A-A}(d) \leq \left \lfloor \frac{|A|}{2} \right \rfloor .
\]
\end{theorem}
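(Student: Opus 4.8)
Set $n := |A|$ and write $A = \{a_1 < a_2 < \dots < a_n\}$. The plan is to reduce the whole statement to a question about intervals of consecutive differences. Since $r_{A-A}(d) = r_{A-A}(-d)$, I may assume $d > 0$, so every representation $a - b = d$ arises from a pair of indices $i > j$ with $a_i - a_j = d$. Writing $b_t := a_{t+1} - a_t$ for the consecutive differences, convexity says $0 < b_1 < b_2 < \dots < b_{n-1}$. A representation then corresponds to a block of consecutive differences summing to $d$, since $a_i - a_j = \sum_{t=j}^{i-1} b_t = d$. Thus counting representations of $d$ is the same as counting index intervals $[j,i)$ whose associated differences sum to $d$.

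First I would record that these intervals form an antichain under inclusion: if one were properly contained in another, then since the $b_t$ are positive, the larger interval would have strictly greater sum, contradicting that both sums equal $d$. In particular no two of the intervals share a left endpoint or a right endpoint, and if I list them so that their left endpoints increase, $j_1 < j_2 < \dots < j_r$, then the right endpoints must increase in the same order, $i_1 < i_2 < \dots < i_r$ (otherwise one interval would contain another).

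The main step, and the crux of the argument, is to show that the lengths $\ell_k := i_k - j_k$ are strictly decreasing. Here I exploit the strict monotonicity of the $b_t$ directly. Comparing interval $k$ with interval $k+1$, whose left endpoints satisfy $j_{k+1} > j_k$, I match the $s$-th difference $b_{j_k+s}$ of interval $k$ with $b_{j_{k+1}+s}$ of interval $k+1$; monotonicity gives $b_{j_{k+1}+s} > b_{j_k+s}$ for every $s$. If interval $k+1$ were at least as long as interval $k$, summing these matched (and possibly additional) positive terms would force the sum of interval $k+1$ to strictly exceed that of interval $k$, contradicting that both equal $d$. Hence $\ell_{k+1} < \ell_k$.

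Finally I would convert this into the cardinality bound. Since $\ell_1 > \ell_2 > \dots > \ell_r \geq 1$ are strictly decreasing positive integers, we get $\ell_1 \geq r$, so $i_1 = j_1 + \ell_1 \geq 1 + r$. As the right endpoints are strictly increasing integers bounded above by $n$, it follows that $n \geq i_r \geq i_1 + (r-1) \geq 2r$, which yields $r \leq \lfloor n/2 \rfloor$. I expect the length-monotonicity step to be the only genuine obstacle; the reduction and the final count are routine once the term-by-term comparison is set up correctly. One subtlety worth flagging is that the intervals may overlap, so that a single index can be the upper end of one representation and the lower end of another; this is precisely why a naive ``$2r$ distinct elements'' count fails and the monotonicity argument is needed.
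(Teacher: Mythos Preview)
Your proof is correct and follows essentially the same strategy as the paper: both arguments show that the representations, suitably ordered, have strictly decreasing lengths and strictly increasing endpoints, which together force $2r\le n$. Your reformulation in terms of the consecutive differences $b_t$ and the term-by-term comparison is a clean way to see the length-monotonicity step; the paper instead orders by decreasing length and proves the equivalent statement that left endpoints jump by at least $2$, but the underlying idea is the same.
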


\begin{proof}

Write the elements of $A$ in increasing order so that $A=\{a_1<a_2< \dots < a_n \}$. Suppose that $d$ can be represented in $t$ different ways as an element of $A-A$. We can write
\begin{align}\label{list}
    d = & a_{j_1+k_1} - a_{j_1} \nonumber
    \\= & a_{j_2+k_2} - a_{j_2} \nonumber
    \\ & \vdots \nonumber
    \\= & a_{j_t+k_t} - a_{j_t}
\end{align}
such that the $k$ indices satisfy
\begin{equation} \label{kdefn}
k_1 > k_2 > \dots > k_t.
\end{equation}
Indeed, because $A$ is convex, we cannot have two of the $k$ indices repeating in the list \eqref{list}. This follows from the fact that, for fixed $k$, the sequence 
\begin{equation} \label{inc1}
(a_{j+k} - a_{j})_{j \in \mathbb N}
\end{equation}
is strictly increasing. Note also that, for fixed $j$, the sequence
\begin{equation} \label{inc2}
(a_{j+k} - a_{j})_{k \in \mathbb N}
\end{equation}
is strictly increasing. This follows immediately from the fact that the $a_i$ are increasing.

\begin{claim} For all $1 \leq i \leq t-1$
\[
j_{i+1} \geq j_{i} +2
\]
\end{claim}

\begin{proof}[Proof of Claim]
Suppose for a contradiction that $j_{i+1} \leq j_{i} +1$. We also have $k_{i+1} \leq k_{i}-1$, and so $j_{i+1}+k_{i+1} \leq j_{i}+k_{i}$. Therefore
\[
a_{j_{i+1}+k_{i+1}} \leq a_{j_{i}+k_{i}}.
\]
But then it follows from \eqref{list} that
\[
0 \leq a_{j_i+k_i} -  a_{j_{i+1}+k_{i+1}} = a_{j_i} - a_{j_{i+1}},
\]
and so
\begin{equation} \label{jcompare}
j_i \geq j_{i+1}.
\end{equation}
However, since the sequences \eqref{inc1} and \eqref{inc2} are strictly increasing, it follows that
\begin{align*}
    a_{j_{i+1}+k_{i+1}} - a_{j_{i+1}} & \leq a_{j_{i}+k_{i+1}} - a_{j_{i}} 
    \\& < a_{j_{i}+k_{i}} - a_{j_{i}} .
\end{align*}
This contradicts \eqref{list}.

\end{proof}

Applying the claim iteratively yields
\begin{equation} \label{jiter}
j_t \geq j_{t-1}+2 \geq j_{t-2}+4 \geq \dots \geq j_1+2(t-1) \geq 1+2(t-1)=2t -1.
\end{equation}
We also know that $j_t+k_t \leq n$ and $k_t \geq 1$. Therefore,
\[
j_t \leq n -1.
\]
Combining this with \eqref{jiter} gives
\[
t \leq n/2.
\]
Finally, since $t$ is an integer, this is equivalent to the bound
\[
t \leq \lfloor n/2 \rfloor.
\]

\end{proof}

\subsection*{Concluding remarks}

Interestingly, the construction cannot be modified to give a rich sum in a convex set. For $x \in \mathbb R$, we use the notation
\[
r_{A+A}(x):=|\{ (a,b) \in A \times A : a+b=x \}|.
\]
In sharp contrast with Theorem \ref{thm:construction}, the bound
\begin{equation} \label{reps}
r_{A+A}(C) \ll |A|^{2/3}.
\end{equation}
holds for any convex set $A$ and $C \in \mathbb R$. The inequality \eqref{reps} was also observed by Schoen \cite{Sch}, and can be proved using the Szemer\'{e}di-Trotter Theorem.

Another interesting direction is to determine how many $k$-rich representations can occur. A well-known application of the Szemer\'{e}di-Trotter Theorem (see for instance \cite{RRNS}) gives the bound
\begin{equation} \label{upper}
|\{d : r_{A-A}(d) \geq t \}| \ll \frac{n^3}{t^3}
\end{equation}
for any convex set $A$ with cardinality $n$. On the other hand, one can glue together $n/t$ copies of the construction in Theorem \ref{thm:construction} with $t$ elements in order to obtain a convex set $A$ with $n$ elements and
\begin{equation} \label{lower}
|\{d : r_{A-A}(d) \geq t \}| \gg \frac{n}{t}.
\end{equation}
There is a considerable gap between the upper and lower bounds of \eqref{upper} and \eqref{lower} respectively, although the bounds converge as $t$ gets close to $n$.

\subsection*{Acknowledgements} The authors were supported by the Austrian Science Fund FWF Project P 34180. We are grateful to Brandon Hanson, Misha Rudnev and Dmitrii Zhelezov for helpfully sharing their insights. We are particularly grateful to Ilya Shkredov for informing us about the reference \cite{Sch}.

\end{document}